\documentclass[11pt]{article}
\usepackage[margin=1in]{geometry}
\usepackage{amsmath,amssymb,amsthm,mathtools}
\usepackage{microtype}
\usepackage{enumitem}
\usepackage{booktabs}
\usepackage[hidelinks]{hyperref}
\usepackage[T1]{fontenc}
\usepackage{lmodern}

\newtheorem{principle}{Principle}[section]
\newtheorem{axiom}{Axiom}[section]
\newtheorem{definition}{Definition}[section]
\newtheorem{proposition}{Proposition}[section]
\newtheorem{remark}{Remark}[section]

\newcommand{\Qp}{\mathbb{Q}_p}
\newcommand{\Zp}{\mathbb{Z}_p}
\newcommand{\Pcal}{\mathcal{P}}
\newcommand{\Bcal}{\mathcal{B}}
\newcommand{\Tcal}{\mathcal{T}}
\newcommand{\Lcal}{\mathcal{L}}
\newcommand{\Ical}{\mathfrak{I}}
\newcommand{\depth}{\operatorname{depth}}
\newcommand{\rad}{\operatorname{rad}}
\newcommand{\Child}{\operatorname{Child}}

\title{\textbf{Beyond Archimedean Intelligence:\\
Toward an Intrinsic \(p\)-Adic Theory of Learning}}
\author{Bourama Toni\\Department of Mathematics. Howard University}
\date{}

\begin{document}
\maketitle

\begin{abstract}
Overwhelmingly Archimedean/Euclidean, the mathematical foundations of contemporary machine learning involve the following: data are typically embedded in real vector spaces, similarity is measured by Euclidean or related norms, learning is formulated through real-valued loss functions, and differential or gradient-based optimization generally drives adaptation. Recent investigations of \(p\)-adic and ultrametric methods in machine learning have demonstrated the potential of non-Archimedean structures for hierarchical representation, neural architectures, classification, and information processing.

Our goal is not merely to construct a \(p\)-adic analogue of an existing Euclidean neural network, rather at initiating the study of learning systems whose primitive mathematical structure is intrinsically non-Archimedean.

Therefore, this note, not a manifesto,  advocates a more foundational point of view. Indeed, rather than searching for \(p\)-adic realizations, approximations, or footprints within existing Archimedean learning systems, we propose that a genuinely non-Archimedean theory of intelligence should be derived intrinsically from the topology, geometry, algebra, and measure structure of \(p\)-adic spaces. In particular, neurons, layers, activation functions, gradients, and backpropagation should not be assumed \emph{a priori} to be primitive concepts of learning.

We formulate a \emph{Non-Archimedean Autonomy Principle} and introduce an axiomatic framework in which information states are organized by ultrametric neighborhoods, learning is represented by hierarchical refinement and redistribution of information across nested balls, and scale is determined by valuation depth. We show a simple structural result showing that, on \(\Zp\), refinement depth is exactly common-prefix depth and that the finite truncations \(\mathbb Z/p^N\mathbb Z\) preserve the ball hierarchy through level \(N\). Thus hierarchy is not an auxiliary representation to be learned: it is already encoded in the geometry.

\end{abstract}

\noindent\textbf{Keywords.} \(p\)-adic learning; ultrametric geometry; non-Archimedean intelligence; hierarchical information; learning dynamics; \(p\)-adic computation.

\medskip
\noindent\textbf{MSC 2020.} 11S80; 54E35; 68T07; 94A15.

\section{Introduction}

\emph{Information is Archimedean}. Or is it not? This is the implicit geometric assumption powering modern artificial intelligence and machine learning, and one should acknowledge its remarkable success, akin to the success enjoyed by Physics for over a century. The mathematical foundations in both cases are Archimedean/Euclidean, upholding the Archimedean principle of \emph{There are arbitrary "big" integers}. 

That is, a state, physical or data space underlying any learning system starts on some space
\[
X\subseteq\mathbb R^n,
\]
equipped with Euclidean geometry and its "as the crow flies" flatness principle. A norm \(||x-y||\) or inner product \( \langle x,y\rangle\), is used to express similarity. Learning is frequently formulated as minimization of a real-valued functional
\[
L:\Theta\longrightarrow\mathbb R.
\]
A local differential mechanism such as
\[
\theta_{k+1}=\theta_k-\eta\nabla L(\theta_k).
\]
governs the evolution of parameters.

However, despite the extraordinary achievements of neural networks and modern machine learning and artificial intelligence there is no established claim that \emph{the Archimedean framework is mathematically necessary for intelligence or learning.} It is only a powerful testament that intelligence-like computational behavior can be modeled successfully along Archimedean principles. Thus  our central question:
\[
\text{Should a mathematical theory of learning be Archimedean?}
\]
or
\[ 
\text{Must Intelligence itself therefore be Archimedean?}
\]
\[
\text{If not, then what viable alternative framework is equally mathematically rigorous and computationally efficient? }
\]
Here we advocate a Non-Archimedean or ultrametric framework using the p-adic space as reference.

For \(p\) prime, the field of p-adic numbers,  \(\Qp\), is the completion of \(\mathbb Q\) with respect to the p-adic norm or ultra norm
\[
|x|_p=p^{-v_p(x)},
\]
with $v_p(x)$ the so-called  p-valuation of $x,$ expressed as, for nonzero rational $x=p^{v_p}\frac{a}{b},$ $a,b$ co-prime with $p,$ . $v_p(0):=\infty.$ The induced metric or ultrametric
\[
d_p(x,y)=|x-y|_p
\]
satisfies the strong triangular inequality
\[
d_p(x,z)\leq\max\{d_p(x,y),d_p(y,z)\}.
\]
Thus \(\Qp\) is ultrametric. The subset $\mathbb{Z}_p=\{x\in \mathbb{Q}_P:\quad |x|_p \le 1 \}$ is the ring  of p-adic integers. We recall

\[
\mathbb{Z}_p \ni x= \sum_{i=0}^\infty x_i p^i:=\cdots x_2x_1x_0x_{-1}\cdots x_{-v_p} ,\quad x_i \in \{0,\dots, p-1\},
\]
also called a \emph{string over an alphabet of p symbols}. p-adic expansion is expressed uniquely as 
\[
\mathbb{Q}_p \ni x = \sum_{i=-v_p}^\infty x_ip^i:=\cdots x_2x_1x_0x_{-1}\cdots x_{-v_p},\quad x_i \in \{0,\dots, p-1\}.
\]
where \(v_p \in \Zp\), \(x_i \in \{0,\ldots,p-1\}\), and 
\( x_{-v_p} \ne 0\).
Additionally,
\[ 
\mathbb Q \subset \Qp,\quad \mathbb Z \subset \Zp,\quad \Zp \cap \mathbb Q = \{\frac{a}{b}:\quad p \nmid b\}
\]
The ensuing geometry has some peculiar properties we summarize as follows: 

\begin{itemize}
\item Closed and open balls respectively denoted
\[
B_r[a]=\{x\in\Qp: |x-a|_p\le r\}.\quad B_s(b)=\{x\in\Qp : |x-b|_p < s\}
\]
are simultaneously open and closed, including the so-called \emph{clopen topology} on \(\Qp\).
\item Balls are nested or disjoint; \( \forall r\le s\in \mathbb R_{+},\quad \forall a,b\in\Qp\), either \(B_r(a)\cap B_s(b) =\empty\) or \( B_r(a) \subset B_s(b) \).
\item Every point of a ball is also its center; \(B_x(a)=B_r(x),\quad \forall x\in B_r(a)\)
\item Intersecting balls of equal radius coincide
\item All triangles are isosceles
\item Scale is naturally encoded by valuation. 
\item Importantly, 
\[
\Zp=B_1[0]= B_{1/p}[0] \sqcup B_{1/p}[1] \sqcup \ldots \sqcup B_{1/p}[p-1].
\]
 that is, a decomposition into a disjoint union of p smaller balls. Thus the hierarchical structure of \(\Zp\). 
 \item  Likewise, 
 \[
 B_{p^n}[a] = \bigsqcup _{i=0}^{p-1} B_{p^{n-1}}[a],\quad \forall B_{p^n}[a] \subset \Qp 
 \]
\end{itemize} 

Consequently, hierarchy is not an external structure imposed upon \(p\)-adic information:

\[
\textbf{Hierarchy in essence is already geometry.}
\]

Some recent work has shown that non-Archimedean mathematics can play a substantive role in learning theory through \(p\)-adic neural networks and cellular neural networks, \(p\)-adic statistical field theories associated with deep belief networks, and \(p\)-adic approaches to representation learning \cite{AT,Martins,Mihara,Nguessan,ZZG,Zuniga,Torresblanca}. 

Here we consider a much different logical starting point. That is, we do not ask questions such as 
\[
\text{What is the \(p\)-adic analogue of a neural network?}
\]
Instead, we are more interested in questions such
\[
\text{What is learning if the information universe is \(p\)-adic from the beginning?}
\]

\section{Relation to Existing p-adic Machine Learning and AI systems}

Existing work on p-adic machine learning such as the referenced papers has demonstrated that non-Archimedean mathematics can successfully enrich neural architectures, representation learning, classification, and information processing. In nearly all such approaches, however, the starting point remains an existing learning architecture whose components are subsequently reformulated over p-adic or ultrametric spaces.

The viewpoint adopted here is fundamentally different. Rather than asking how existing learning systems may be implemented over p-adic domains, we ask what mathematical structures should be regarded as \emph{primitive} if the information universe were non-Archimedean from the outset.

Consequently, neurons, layers, activation functions, gradients, and backpropagation are treated not as axioms but as possible emergent computational realizations of a deeper intrinsic learning theory.

However distinct, the two viewpoints are complementary. Existing p-adic machine learning/artificial intelligence investigates non-Archimedean realizations of contemporary learning systems, whereas the present work investigates the intrinsic mathematical foundations from which non-Archimedean learning itself might emerge.

\section{Principle of Intrinsic Non-Archimedean Learning}

\begin{principle}[Intrinsic Non-Archimedean Learning]

A theory of learning over a non-Archimedean information space should be derived from the intrinsic topology, geometry, algebra, and measure structure of its ultrametric state space. Any Archimedean interpretation and realization, Euclidean embedding, or computational implementation is secondary to the intrinsic mathematical theory
\end{principle}

The principle describes an order of construction:
\[
\text{non-Archimedean structure}
\to
\text{learning primitives}
\to
\text{learning dynamics}
\to
\text{architecture}
\to
\text{physical realization}.
\]

However, the reverse procedure,
\[
\text{existing neural architecture}\to\text{\(p\)-adic modification},
\]
may yield useful computational models, but it need not reveal the intrinsic mathematical form of non-Archimedean learning. In particular, the  non-archimedean autonomy requires that the concepts
\[
\text{neuron},\quad \text{layer},\quad \text{activation},\quad
\text{gradient},\quad \text{backpropagation}
\]
are not assumed. They are successful primitives of Archimedean machine learning; they are not and should not be the axioms of intelligence.

\section{Ultrametric Information Spaces}

A classic/Archimedean information space is framed around: structure and semantics - navigability - codification and abstraction - multidimensionality - rule based dynamics - user centric architecture.

Let \((\mathbb X,d)\) be a complete ultrametric space, with the usual examples \(\mathbb X=\Zp^n\) and \(\mathbb X=\Qp^n\). For \(x\in \mathbb X\) and \(r>0\), write
\[
B_r(x)=\{y\in \mathbb X:d(x,y)\le r\},
\]
and let \(\Bcal(\mathbb X)\) denote the family of admissible information balls.

We interpret \(B\in\Bcal(\mathbb X)\) as an information class at a specified resolution. If \(B'\subset B\), then \(B'\) represents a refinement of the information encoded by \(B\).

For \(\mathbb X=\Zp\),
\[
\Zp\supset a_0+p\Zp\supset a_0+a_1p+p^2\Zp\supset\cdots.
\]
For
\[
x=a_0+a_1p+a_2p^2+\cdots,
\]
each additional \(p\)-adic digit increases information resolution.

\begin{definition}[Information depth]
If \(\rad(B)=p^{-k}\), the radius of an admissible information ball,  define
\[
\depth(B)=k
\]
as the information depth. 
\end{definition}
\begin{remark}
   Information refinement corresponds to increasing valuation depth. 
\end{remark}
\subsection{Intrinsic \(p\)-Adic Learning}

Let a complete ultrametric space \( (\mathbb X,d)) \) be given. We propose the following axiomatics for an intrinsic p-adic learning 

\begin{axiom}[Ultrametric state]
The primitive information state belongs to a complete ultrametric space \((\mathbb X,d)\). Similarity is determined intrinsically by \(d\), not by an auxiliary Euclidean embedding.
\end{axiom}

\begin{axiom}[Hierarchical locality]
Two information states interact according to their smallest common ultrametric neighborhood. When it exists, set
\[
B(x,y)=\min\{B\in\Bcal(\mathbb X):x,y\in B\},
\qquad
\ell(x,y)=\depth(B(x,y)).
\]
Locality is hierarchical rather than spatial.
\end{axiom}

\begin{axiom}[Refinement]
Learning permits transitions
\[
B\longrightarrow B_i,\qquad B_i\subseteq B.
\]
Thus learning changes the resolution at which information is distinguished.
\end{axiom}

\begin{axiom}[Measure redistribution]
An information state may be represented by \(\mu\in\Pcal(\mathbb X)\). Learning is an evolution
\[
\mu_t\longrightarrow\mu_{t+1},
\qquad
\mu_{t+1}=\Lcal_{\Theta_t}\mu_t,
\]
where \(\Lcal_{\Theta_t}:\Pcal(\mathbb X)\to\Pcal(\mathbb X)\) acts on the hierarchical information structure.
\end{axiom}

\begin{axiom}[Valuation scale]
For \(x,y\in\Qp\), the quantity \(v_p(x-y)\) measures the depth of common \(p\)-adic information. Similarity, resolution, and interaction scale arise from valuation.
\end{axiom}

\begin{axiom}[Non-Archimedean closure]
The fundamental learning dynamics are definable within the non-Archimedean information space before any Archimedean observation or implementation map is introduced. A map
\[
\Phi:\mathbb X\to \mathbb Y\subseteq\mathbb R^m
\]
is an observation, representation, or implementation map; it is not part of the intrinsic definition of learning.
\end{axiom}

\subsection{The Prefix--Ball Correspondence}

Our proposed viewpoint involves the following proposition readily available in many books on p-adic analysis. We include here a formulation and proof adapted to our framework

\begin{proposition}[Intrinsic hierarchy and finite preservation]\label{prop:prefix}
Let
\[
x=\sum_{j=0}^{\infty}a_jp^j,\qquad
y=\sum_{j=0}^{\infty}b_jp^j
\]
belong to \(\Zp\), with \(a_j,b_j\in\{0,\ldots,p-1\}\). Then, for \(k\ge1\),
\[
x\equiv y\pmod{p^k}
\quad\Longleftrightarrow\quad
a_j=b_j\ \text{for }0\le j<k
\quad\Longleftrightarrow\quad
|x-y|_p\le p^{-k}.
\]
Consequently, the depth of the smallest common ball containing \(x\) and \(y\) is determined exactly by their maximal common initial \(p\)-adic digit string. Moreover, for every \(N\ge1\), the truncation
\[
\pi_N:\Zp\to\mathbb Z/p^N\mathbb Z
\]
preserves all ball inclusions and separations through depth \(N\).
\end{proposition}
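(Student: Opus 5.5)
We prove the chain of equivalences first, then read off the two structural consequences. For the first equivalence, we write \(x-y=\sum_{j\ge0}(a_j-b_j)p^j\) and split it at index \(k\) as \(x-y=D_k+p^kR_k\), where \(D_k=\sum_{j<k}(a_j-b_j)p^j\in\mathbb Z\) and \(R_k\in\Zp\). Then \(x\equiv y\pmod{p^k}\), meaning \(x-y\in p^k\Zp\), holds iff \(p^k\mid D_k\) in \(\Zp\), and hence in \(\mathbb Z\) (since \(\Zp\cap\mathbb Q\) consists of fractions with denominator prime to \(p\)). Next, \(|D_k|\le\sum_{j<k}(p-1)p^j=p^k-1<p^k\), so \(p^k\mid D_k\) forces \(D_k=0\). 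Equivalently, \(\sum_{j<k}a_jp^j=\sum_{j<k}b_jp^j\), and uniqueness of base-\(p\) expansion of integers in \([0,p^k)\) gives \(a_j=b_j\) for \(j<k\). The converse is immediate, since equal digits give \(D_k=0\).

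For the second equivalence, we use \(|z|_p=p^{-v_p(z)}\). The condition \(|x-y|_p\le p^{-k}\) is equivalent to \(v_p(x-y)\ge k\), which is equivalent to \(x-y\in p^k\Zp\), that is, to the congruence. We also need the fact that every \(z\in\Zp\) with \(v_p(z)\ge k\) lies in \(p^k\Zp\). This follows because \(z/p^k\) has norm at most \(1\) and so lies in \(\Zp\) by the definition \(\Zp=B_1[0]\).

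For the consequence about the smallest common ball, we note that the balls of \(\Zp\) of radius \(p^{-k}\) containing \(x\) are exactly the cosets \(x+p^k\Zp\). By the equivalences, \(y\) lies in this ball iff the first \(k\) digits agree. Let \(m\) be the length of the maximal common prefix, so that \(a_j=b_j\) for \(j<m\) and \(a_m\ne b_m\), with \(m=\infty\) if \(x=y\). Then \(y\in x+p^k\Zp\) exactly for \(k\le m\). By nestedness the admissible balls containing both points form a chain, and its smallest element is \(x+p^m\Zp\). Hence \(\ell(x,y)=m=v_p(x-y)\).

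For the truncation statement, we use that \(\pi_N\) is the reduction ring map with kernel \(p^N\Zp\), which sends \(x\) to \(\sum_{j<N}a_jp^j\). For \(k\le N\), the balls of depth \(k\) in \(\Zp\) are the fibres of \(\pi_k\), and \(\pi_k\) factors through \(\pi_N\) followed by reduction \(\mathbb Z/p^N\to\mathbb Z/p^k\). It follows that \(\pi_N(x+p^k\Zp)=\pi_N(x)+p^k\mathbb Z/p^N\mathbb Z\), and that \(\pi_N^{-1}\) of this image is again \(x+p^k\Zp\). Therefore \(B\mapsto\pi_N(B)\) is a bijection between balls of depth at most \(N\) and the cosets of the subgroups \(p^k\mathbb Z/p^N\mathbb Z\). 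Because it has an inverse given by preimage, the map respects inclusion in both directions: \(B'\subseteq B\) iff \(\pi_N(B')\subseteq\pi_N(B)\). It also respects separation, meaning disjoint balls map to disjoint cosets. In particular \(\ell\) is recovered from \(\pi_N\) as \(\min(\ell(x,y),N)\).

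The only step needing care is the digit-bound argument \(|D_k|<p^k\) in the first equivalence, together with the passage between divisibility in \(\Zp\) and in \(\mathbb Z\). Everything else is bookkeeping with cosets.
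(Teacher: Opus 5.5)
Your proof is correct and follows the same route as the paper. The congruence becomes \(v_p(x-y)\ge k\), hence \(|x-y|_p\le p^{-k}\); digit agreement comes from uniqueness of expansions; and truncation identifies each depth-\(k\) ball \(a+p^k\Zp\), \(k\le N\), with the coset \(\pi_N(a)+p^k(\mathbb Z/p^N\mathbb Z)\).

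You go further than the paper's sketch in three places. You prove the step it compresses into ``uniqueness of the \(p\)-adic expansion'' via \(|D_k|<p^k\). You actually derive the smallest-common-ball claim; this holds for \(x\neq y\), and when \(x=y\) there is no smallest ball, which is why the paper writes ``when it exists.'' And you use the saturation identity \(\pi_N^{-1}(\pi_N(B))=B\) to obtain preservation of inclusions in both directions and of disjointness at once.
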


\begin{proof}
The congruence \(x\equiv y\pmod{p^k}\) is equivalent to \(p^k\mid(x-y)\), hence to
\[
v_p(x-y)\ge k.
\]
By uniqueness of the \(p\)-adic expansion, this occurs exactly when
\[
a_0=b_0,\ldots,a_{k-1}=b_{k-1}.
\]
Since \(|x-y|_p=p^{-v_p(x-y)}\), the valuation inequality is equivalent to
\[
|x-y|_p\le p^{-k}.
\]
Thus common-prefix depth, valuation depth, and ultrametric proximity coincide.
\end{proof}

\begin{remark}
Consequently, the depth of the smallest common ball containing \(x\) and \(y\) is determined exactly by their maximal common initial \(p\)-adic digit string. Moreover, for every \(N\ge1\), the truncation
\[
\pi_N:\Zp\to\mathbb Z/p^N\mathbb Z
\]
preserves all ball inclusions and separations through depth \(N\).
\end{remark}

Indeed, let \(B=a+p^k\Zp\) with \(k\le N\). Its image under \(\pi_N\) is the congruence class
\[
\pi_N(a)+p^k(\mathbb Z/p^N\mathbb Z).
\]
If \(B'\subseteq B\) has depth at most \(N\), the corresponding congruence conditions are nested and remain nested after reduction modulo \(p^N\). If two balls of depth at most \(N\) are disjoint, their defining prefixes differ in one of the first \(N\) digits, so their images under \(\pi_N\) remain disjoint. Hence the rooted ball hierarchy is preserved through level \(N\).

\begin{remark}
This Proposition \ref{prop:prefix} gives a precise form to the statement that hierarchy is already geometry. No auxiliary clustering procedure is required to manufacture the rooted hierarchy: the hierarchy is canonically encoded by congruence, valuation, and ultrametric balls. 

It also shows that finite \(p\)-adic computation is not merely an arbitrary approximation. The quotient \(\mathbb Z/p^N\mathbb Z\) retains exactly the first \(N\) levels of intrinsic information geometry.
\end{remark}

\section{Learning as Evolution on the Tree of Balls}

Let \(\Tcal_p\) denote the rooted \(p\)-ary tree of balls of \(\Zp\). A vertex at depth \(k\) represents
\[
B_{k,a}=a+p^k\Zp.
\]
As such each vertex has \(p\) \emph{children} (successors),
\[
B_{k+1,a+jp^k},\qquad j=0,\ldots,p-1.
\]

We propose that the primitive learning object need not be a neuron. It may instead be a \emph{hierarchical transition} rule
\[
T_\Theta(B,B')
\]
for \(B'\in\Child(B)\), satisfying
\[
T_\Theta(B,B')\ge0,
\qquad
\sum_{B'\in\Child(B)}T_\Theta(B,B')=1.
\]
The quantity \(T_\Theta(B,B')\) weights refinement from \(B\) toward \(B'\). A learning state is therefore a weighted hierarchical system
\[
(\Tcal_p,\mu_t,T_{\Theta_t}),
\]
and learning occurs precisely when
\[
\Theta_t\longrightarrow\Theta_{t+1},
\qquad
\mu_{t+1}=\Lcal_{\Theta_t}\mu_t.
\]

Now we present a more  explicit definition of what should be understood by \emph{Intrinsic Learning}.

\section{Intrinsic Learning: A Definition}

\begin{definition}[Intrinsic \(p\)-adic learning system]
An intrinsic \(p\)-adic learning system is a quadruple
\[
\mathfrak L_p=(\mathbb X,\Bcal,\mu,\Lcal),
\]
where:
\begin{enumerate}[label=(\roman*)]
\item \(\mathbb X\) is a complete \(p\)-adic or ultrametric information space;
\item \(\Bcal\) is its hierarchical family of information balls;
\item \(\mu\in\Pcal(\mathbb X)\) is an information-state measure;
\item \(\Lcal\) is a family of hierarchical information-evolution operators acting intrinsically on \(X\).
\end{enumerate}
\end{definition}

An information-state measure is also referred to as \emph{entropy} and is used to quantity uncertainty. 

Of note, a \emph{learning trajectory}  is a sequence \((\mu_t)_{t\ge0}\) satisfying
\[
\mu_{t+1}=\Lcal_{\Theta_t}\mu_t.
\]

The system \(\mathfrak L_p\) learns relative to an information criterion \(\Ical\) if
\[
\Ical(\mu_{t+1})>\Ical(\mu_t)
\]
for an appropriate class of learning transitions. A functional
\[
I: \Pcal(\mathbb X) \longrightarrow \mathbb R
\]
is intrinsic if it depends only upon the ultrametric structure and is invariant under ultrametric isometries.

The criterion \(\Ical\) may be related to discrimination information/generalized discrimination information, predictive information, hierarchical entropy, or a non-Archimedean model-improvement functional. This raises a problem central to our viewpoint:
\[
\text{What is the intrinsic information criterion of \(p\)-adic learning?}
\]

And should Hierarchy be learned?

\section{Hierarchy Need Not Be Learned}

In a conventional deep network, hierarchy is often represented schematically as
\[
\text{input}\to\text{features}\to\text{higher features}\to\text{representation}.
\]
In an ultrametric information space,
\[
B_0\supset B_1\supset B_2\supset\cdots
\]
already provides a hierarchy of resolutions.

Consequently, p-adic learning may not require the construction of hierarchy. It may consist of discovering how information should move through an existing hierarchy: In other words

\[
\text{Archimedean learning often constructs hierarchy;}
\]
However, 
\[
\text{\(p\)-adic learning may just move through hierarchy. Depth in a non-Archimedean learning system may be identified with valuation depth rather than network depth}
\]
That is
\[
\textbf{Hierarchy is not Learned. Hierarchy is Geometry}
\]

If hierarchy is already encoded in the geometry, then what should be guiding the set of computational tasks?

\section{From Intrinsic Theory to Finite Computation}

For
\[
x=x_0+x_1p+\cdots+x_{N-1}p^{N-1}+\cdots,
\]
define
\[
\pi_N(x)=x_0+x_1p+\cdots+x_{N-1}p^{N-1}.
\]
The finite information space is
\[
\mathbb{X}_N=\mathbb Z/p^N\mathbb Z.
\]
By Proposition \ref{prop:prefix}, the truncated ball tree retains the intrinsic hierarchy through depth \(N\). Thus one obtains finite systems
\[
\mathfrak L_p^{(N)}
=
(\mathbb{X}_N,\Bcal_N,\mu_N,\Lcal_N).
\]

The above analysis would strongly suggest the three steps
\begin{enumerate}
\item Intrinsic \(p\)-adic learning theory
\item Finite hierarchical computation modulo \(p^N\)
\item Hardware realization
\end{enumerate}

We emphasize that the mathematical structure should determine the computational primitives such as :
digit comparison, valuation detection; prefix agreement; tree routing; hierarchical aggregation. However, It is customary in the current state of development of machine learning and artificial intelligence that hardware realization requirements precede and often dictate the mathematics, e.g., constructing a p-adic analogue of existing neural networks/microchips.

The hardware question is therefore not merely how existing processors can imitate \(p\)-adic arithmetic. 

\[
\text{What hardware primitives are demanded by intrinsic \(p\)-adic learning?}
\]

Our viewpoint therefore prompts the following mathematical problems and research directions we are currently and diligently addressing.

\section{Open Mathematical Problems}

\begin{enumerate}
\item Characterize information functionals
\[
\Ical:\Pcal(X)\to\mathbb R
\]
intrinsic to ultrametric refinement.

\item Define a non-Archimedean model-improvement information associated with
\[
\mu_t\to\mu_{t+1}.
\]

\item Determine conditions under which \(\Lcal_\Theta\) admits fixed points, periodic states, or invariant measures.

\item Develop a stability theory for learning trajectories on \(\Pcal(\Zp)\).

\item Determine whether intrinsic learning dynamics should use discrete time, real time, or \(p\)-adic time.

\item Formulate attention intrinsically through valuation depth and common ultrametric neighborhoods.

\item Characterize generalization as stability of learned behavior on ultrametric balls.

\item Construct finite learning systems over \(\mathbb Z/p^N\mathbb Z\) converging, in an appropriate sense, to an intrinsic \(p\)-adic learning system.

\item Identify minimal computational primitives for a native hierarchical \(p\)-adic processor/microchip.

\item Determine whether a theory of \(p\)-adic learning requires objects analogous to neurons at all.
\end{enumerate}

\section{Concluding Remarks}

The mathematical foundations of contemporary machine learning are predominantly Archimedean. This reflects the historical development and extraordinary success of real-valued analysis, optimization, and computation. It should not be interpreted as a mathematical necessity.

The \(p\)-adic numbers provide a complete non-Archimedean information space in which hierarchy, scale, and locality have fundamentally different meanings. If learning is formulated intrinsically in such a space, familiar concepts such as neurons, layers, activation functions, gradients, and backpropagation may cease to be primitive.

We have proposed the fundamental \emph{Non-Archimedean Autonomy Principle}:

\[
\text{Derive learning from intrinsic geometry before imposing physical realizability.}
\]

Within this viewpoint, information is organized by nested ultrametric balls, scale is valuation depth, and learning is represented by refinement and redistribution across a hierarchical state space. The aboce Proposition \ref{prop:prefix} shows that common-prefix depth, valuation depth, and ultrametric proximity coincide on \(\Zp\), while finite quotients preserve the hierarchy through their truncation depth.

The core claim here is:
\[
\textbf{The mathematical theory of intelligence should not be assumed a priori to be Archimedean.}
\]

The objective is not to \(p\)-adicize existing artificial intelligence/artificial general intelligence. It is to ask what intelligence might mean in a mathematical universe that was non-Archimedean from the beginning: Determine the mathematical principles from which an intrinsically non-Archimedean theory of intelligence would emerge.

In an intrinsic non-Archimedean theory of intelligence, hierarchy is not an architecture constructed and imposed upon information; it is an inherent property of the information space itself. That is, 
\[
\textbf{Hierarchy is not learned. It is geometry.}
\]

\end{document}